\documentclass[10pt]{amsart}
\usepackage[paperwidth=160mm,paperheight=220mm,total={12cm,19.3cm},top=15mm,left=23mm,includeheadfoot]{geometry}

\usepackage[utf8]{inputenc}
\usepackage{amsmath}
\usepackage{amsfonts}
\usepackage{amssymb,amsthm,mathrsfs}
\usepackage[firstinits=true,sorting=none,style=numeric-comp,backend=bibtex]{biblatex}

\usepackage{tikz}

\usepackage{graphicx}
\usepackage{subcaption}
\graphicspath{{./figuresMod2/}}

\usepackage[colorlinks=true,linkcolor=blue,citecolor=blue]{hyperref}

\bibliography{model2}

\newtheorem{teo}{Theorem}[section]

\newtheorem{lem}[teo]{Lemma}

\theoremstyle{remark}
\newtheorem{ejm}{Example}[section]
\newtheorem{rem}{Remark}[section]

\newcommand{\der}[1]{\frac{\mathrm{d}#1}{\mathrm{d}t}}
\newcommand{\ds}{\der{S}}
\newcommand{\di}{\der{I}}
\newcommand{\dr}{\der{R}}

\title[Global stability for SIRS epidemic models with general \ldots]{Global stability for SIRS epidemic models with general incidence rate and transfer from infectious to susceptible}
\author[Ángel G. Cervantes-Pérez, Eric J. Ávila-Vales]{}

\begin{document}
    
\maketitle

\begin{center}
    {\small \textsc{Ángel G. Cervantes-Pérez$^1$}, \textsc{Eric J. Ávila-Vales$^2$}
    }
\end{center}
\bigskip
\begin{center} {\small \sl $^{1,2}$ Facultad de Matem\'aticas, Universidad Aut\'onoma de Yucat\'an, \\ Anillo Perif\'erico Norte, Tablaje 13615, C.P. 97119, M\'erida, Yucat\'an, Mexico}
\end{center}

\bigskip

\begin{abstract}
    We study a class of SIRS epidemic dynamical models with a general non-linear incidence rate and transfer from infectious to susceptible. The incidence rate includes a wide range of monotonic, concave incidence rates and some non-monotonic or concave cases. We apply LaSalle’s invariance principle and Lyapunov's direct method to prove that the disease-free equilibrium is globally asymptotically stable if the basic reproduction number $R_0\le1$, and the endemic equilibrium is globally asymptotically stable if $R_0>1$, under some conditions imposed on the incidence function $f(S,I)$.
\end{abstract}

\section{Introduction}

In the theory of epidemic mathematical models, the SIRS (susceptible-infected-removed) model is one of the most important ones. In recent years, the global stability of equilibria for SIRS autonomous epidemic models has received a lot of attention. Kermack and McKendrick \cite{kermack1927contribution} studied one of the simplest epidemic models of this type. From then on, a number of researchers have made studies on them \cite{lahrouz2011global,korobeinikov2006lyapunov,buonomo2010lyapunov,muroya2011global,vargas2011global,xu2009stability}. The incidence rate of a disease measures how fast the disease is spreading, and it plays an important role in the research of mathematical epidemiology. In many previous epidemic models, the bilinear incidence rate $\beta SI$ was frequently used \cite{hethcote2000mathematics,kermack1927contribution,mena1992dynamic,guo2006global}. However, there are some advantages for adopting more general forms of incidence rates. For instance, Capasso and his co-workers observed in the seventies \cite{capasso1978global,capasso1977modelli,capasso1978generalization} that the incidence rate may increase more slowly as $I$ increases, so they proposed a saturated incidence rate.

\citeauthor{ruan2003dynamical} \cite{ruan2003dynamical} studied an epidemic model with a non-linear incidence rate of the form $\beta I^2S/(1+\rho I^2)$. Liu et al. \cite{liu1986influence} initially introduced a general incidence rate $\beta I^pS/(1+\rho I^q)$. \citeauthor{lahrouz2012complete} \cite{lahrouz2012complete} proposed a more general incidence rate $\beta SI/\psi(I)$, which includes the cases that model the ``psychological effect'': when the number of infectives is large, the force of infection may decrease as the number of infectives increases. This happens when $I/\psi(I)$ is increasing for small $I$ but decreases when the value of $I$ gets larger.

As models with more general incidence functions are considered, the dynamics of the system become more complicated. Models with  incidence functions of the form $g(I)h(S)$ have been studied, such as \cite{korobeinikov2005non,tang2017new}. In the most general case, the transmission of the disease may be given by a non-factorable function of $S$ and $I$. For example, Korobeinikov studied in \cite{korobeinikov2006lyapunov,korobeinikov2007global} the global stability of SIR and SIRS models that incorporate an incidence function $f(S,I)$, which eliminates the monotony and concavity conditions that were common in previous models.

In this paper, we propose a modification of the SIRS model considered by \citeauthor{li2017threshold} in~\cite{li2017threshold} with a more general incidence function $f(S,I)$. We will study the model given by the following system of differential equations:

\begin{equation}\label{mod2}
\begin{aligned}
	\ds & =  \Lambda - \mu S -f(S,I) +\gamma_1I +\delta R, \\
	\di & =  f(S,I) - (\mu+\gamma_1+\gamma_2+\alpha)I,     \\
	\dr & =  \gamma_2I -(\mu+\delta)R,
\end{aligned}
\end{equation}
where the parameters are:
\begin{itemize}
    \item $\Lambda$: recruitment rate of susceptible individuals.
    \item $\mu$: natural death rate.
    \item $\gamma_1$: transfer rate from the infected class to the susceptible class.
    \item $\gamma_2$: transfer rate from the infected class to the recovered class.
    \item $\alpha$: disease-induced death rate.
    \item $\delta$: immunity loss rate.
\end{itemize}

$\Lambda$ and $\mu$ are assumed to be positive, while the parameters $\alpha$, $\gamma_1$, $\gamma_2$ and $\delta$ are non-negative.

We make the following hypotheses on $f$:

\begin{description}
    \item[(H1)] $f(S,I)=If_1(S,I)$ with $f,f_1\in C^2\left(\mathbb{R}^2_+\to\mathbb{R}_+\right)$ and $f(0,I)=f(S,0)=0$ for all $S,I\ge0$.
    \item[(H2)] $\frac{\partial f_1}{\partial S}(S,I)>0$ and $\frac{\partial f_1}{\partial I}(S,I)\le0$ for all $S,I\ge0$.
    \item[(H3)] $\lim\limits_{I\to 0^+}\frac{f(S,I)}{I}$ exists and is positive for all $S>0$.
\end{description}

It is easy to see that every solution of \eqref{mod2} with non-negative initial conditions remains non-negative. If we add up the three equations of the system, we have
\begin{equation}
\der{(S+I+R)}=\Lambda-\mu(S+I+R)-\alpha I\le\Lambda-\mu(S+I+R),
\end{equation}
which implies that the set
\[\Omega=
\left\{(S,I,R)\in\mathbb{R}^3_+\mid S+I+R\le\frac{\Lambda}{\mu}\right\}
\]
is a positively invariant and attractive set for \eqref{mod2}, so we can restrict our attention to solutions with initial conditions in the feasible region $\Omega$.

For all values of the parameters, System \eqref{mod2} has a disease free equilibrium, which is given by $E_0=(S_0,\ 0,\ 0)$ with $S_0=\Lambda/\mu$.

We will prove now a result which will be needed in the following sections.

\begin{lem}\label{lem1}
    Let
    \begin{equation*}
    \beta=\frac{\mu}{\Lambda}\frac{\partial f(S_0,\ 0)}{\partial I}.
    \end{equation*} Then $\beta>0$ and $f(S,I)\le\frac{\Lambda}{\mu}\beta I$ for all $0\le S,I\le S_0$.
\end{lem}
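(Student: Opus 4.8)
The plan is to exploit the product structure $f(S,I)=If_1(S,I)$ to turn both claims into simple statements about $f_1$, and then use the monotonicity hypotheses (H2) to bound $f_1$ over the square $0\le S,I\le S_0$. The key preliminary observation is that differentiating $f(S,I)=If_1(S,I)$ in $I$ gives $\frac{\partial f}{\partial I}(S,I)=f_1(S,I)+I\frac{\partial f_1}{\partial I}(S,I)$, which at $I=0$ collapses to $\frac{\partial f}{\partial I}(S,0)=f_1(S,0)$. In particular $\frac{\partial f}{\partial I}(S_0,0)=f_1(S_0,0)$, so $\beta=\frac{\mu}{\Lambda}f_1(S_0,0)$ and the quantity on the right-hand side of the claimed inequality simplifies to $\frac{\Lambda}{\mu}\beta I=f_1(S_0,0)\,I$.

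To prove $\beta>0$, I would note that by continuity of $f_1$ one has $\lim_{I\to0^+}\frac{f(S,I)}{I}=\lim_{I\to0^+}f_1(S,I)=f_1(S,0)$, so hypothesis (H3) says precisely that $f_1(S,0)>0$ for every $S>0$. Since $S_0=\Lambda/\mu>0$, this gives $f_1(S_0,0)>0$, and hence $\beta=\frac{\mu}{\Lambda}f_1(S_0,0)>0$.

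For the inequality, after dividing by $I$ (the case $I=0$ being trivial, as both sides vanish) it suffices to show $f_1(S,I)\le f_1(S_0,0)$ for all $0\le S,I\le S_0$. Here I would chain the two parts of (H2): since $\frac{\partial f_1}{\partial S}>0$, the function $f_1$ is increasing in $S$, so $f_1(S,I)\le f_1(S_0,I)$ whenever $S\le S_0$; since $\frac{\partial f_1}{\partial I}\le0$, the function $f_1$ is non-increasing in $I$, so $f_1(S_0,I)\le f_1(S_0,0)$ whenever $I\ge0$. Composing these two bounds yields $f_1(S,I)\le f_1(S_0,0)$ on the whole region, and multiplying back by $I\ge0$ gives $f(S,I)=If_1(S,I)\le f_1(S_0,0)\,I=\frac{\Lambda}{\mu}\beta I$.

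I do not expect a genuine obstacle here; the argument is essentially a one-line monotonicity estimate. The only points requiring care are (i) matching the one-sided limit in (H3) with the boundary value $f_1(S_0,0)$ that defines $\beta$, which is legitimate because $f_1\in C^2$ ensures the limit equals the value at $I=0$, and (ii) observing that the two monotonicity inequalities can be composed because the first varies $S$ at fixed $I$ while the second varies $I$ at the fixed value $S=S_0$, so together they trace a path from $(S,I)$ to the maximizing corner $(S_0,0)$ of the region.
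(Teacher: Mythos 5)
Your proof is correct and takes essentially the same route as the paper's: both establish $\beta>0$ by identifying $\frac{\partial f}{\partial I}(S_0,0)$ with $f_1(S_0,0)$ (the paper via the difference quotient using $f(S,0)=0$, you via the product rule — an equivalent computation given $f_1\in C^2$), and both obtain the bound by chaining the two monotonicity inequalities from \textbf{(H2)} through the corner $(S_0,0)$. No gaps.
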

\begin{proof}
    From \textbf{(H3)}, it follows that
    \[\lim_{I\to 0^+}\frac{f(S,I)}{I}
    =\lim_{I\to 0^+}\frac{f(S,I)-f(S,0)}{I-0}=\frac{\partial f(S,0)}{\partial I}
    \]
    exists and is positive for all $S>0$. In particular, for $S=S_0=\Lambda/\mu$ we have
    \begin{equation*}
    \lim_{I\to 0^+}\frac{f(S_0,I)}{I}
    =\frac{\partial f(S_0,0)}{\partial I},
    \end{equation*}
    so the number $\beta=\frac{\mu}{\Lambda}\frac{\partial f(S_0,\ 0)}{\partial I}$ is well-defined and positive.
    
    Now, we have from \textbf{(H2)} that $f_1(S,I)$ is increasing with respect to $S$. Thus, if we take a fixed $S$ such that $0\le S\le S_0$, then
    \begin{equation}\label{ineq1}
    \frac{f(S,I)}{I}=f_1(S,I)\le f_1(S_0,I)\quad\text{for all }I>0.
    \end{equation}
    On the other hand, from \textbf{(H2)} we also have that $f_1(S,I)$ is non-increasing as a function of $I$. Hence, by \textbf{(H3)} we have
    \begin{equation}\label{ineq2}
    f_1(S_0,I)
    \le f_1(S_0,0)=\lim_{I\to 0^+}f_1(S_0,I)=\lim_{I\to 0^+}\frac{f(S_0,I)}{I}
    =\frac{\partial f(S_0,0)}{\partial I}
    =\frac{\Lambda}{\mu}\beta
    \end{equation}
    for all $I>0$. Then, from \eqref{ineq1} and \eqref{ineq2} we can conclude that
    \begin{equation}
    \frac{f(S,I)}{I}\le\frac{\Lambda}{\mu}\beta
    \quad\text{for $0\le S\le\frac{\Lambda}{\mu}$ and $I>0$},
    \end{equation}
    and thus, $f(S,I)\le\frac{\Lambda}{\mu}\beta I$ for $0\le S,I\le S_0$.
\end{proof}

\section{Basic reproduction number}

In this section, we calculate the basic reproduction number $R_0$ of the model using the next-generation matrix method described in \cite{van2002reproduction}. For that, we write the system as
\[
\frac{\text{d}x}{\text{d}t}=\mathscr{F}(x)-\mathscr{V}(x),
\]
where $x=[I,S,R]^T$,
\begin{equation}\label{fv}
\mathscr{F}(x)=
\begin{bmatrix}
	f(S,I) \\
	0      \\
	0
\end{bmatrix},\quad
\mathscr{V}(x)=
\begin{bmatrix}
	(\mu+\gamma_1+\gamma_2+\alpha)I             \\
	-\Lambda +\mu S +f(S,I) -\gamma_1I-\delta R \\
	-\gamma_2I +(\mu+\delta)R
\end{bmatrix}.
\end{equation}
The disease-free equilibrium is given by $x_0=\left[0,\ \Lambda/\mu,\ 0\right]$, with the variables ordered as $(I,S,R)$. Computing $F$ and $V$ defined by
\[F=\frac{\partial\mathscr{F}_1}{\partial I}(x_0),\qquad
V=\frac{\partial\mathscr{V}_1}{\partial I}(x_0),
\]
we obtain
\begin{equation*}
F=\frac{\partial f(\Lambda/\mu,\ 0)}{\partial I}
=\frac{\Lambda}{\mu}\beta,\quad
V=\mu+\gamma_1+\gamma_2+\alpha,
\end{equation*}
so the next-generation matrix is given by
\begin{equation*}
FV^{-1}=\frac{\Lambda\beta}{\mu(\mu+\gamma_1+\gamma_2+\alpha)}.
\end{equation*}
The basic reproduction number $R_0$ is given by the spectral radius of the matrix $FV^{-1}$. Hence,
\begin{equation}\label{r0}
R_0=\frac{\Lambda\beta}{\mu(\mu+\gamma_1+\gamma_2+\alpha)}.
\end{equation}

According to the general result proved in \cite{van2002reproduction}, we can conclude that the disease-free equilibrium $E_0$ is locally asymptotically stable if $R_0<1$ and it is unstable if $R_0>1$.

\section{Existence of equilibria}

We have already established the existence of the disease-free equilibrium $E_0=(\Lambda/\mu,\ 0,\ 0)$ for System \eqref{mod2} for all values of the parameters. We will now prove the existence of at least one endemic equilibrium in the case when $R_0>1$.

\begin{teo}
    If $R_0>1$, then System \eqref{mod2} has an endemic equilibrium.
\end{teo}

\begin{proof}
    At an equilibrium point $(S,I,R)\in\Omega$ of \eqref{mod2}, the equalities
    \[\Lambda - \mu S -f(S,I) +\gamma_1I +\delta R=0,\quad
    f(S,I) - (\mu+\gamma_1+\gamma_2+\alpha)I=0,\quad
    \gamma_2I -(\mu+\delta)R=0\]
    must hold. The last one implies that $R=\gamma_2I/(\mu+\delta)$. By substituting this expression for $R$ and $f(S,I)=(\mu+\gamma_1+\gamma_2+\alpha)I$ in the first equality, we have
    \begin{equation}
    \Lambda-\mu S-(\mu+\gamma_2+\alpha)I+\frac{\delta\gamma_2I}{\mu+\delta}=0.
    \end{equation}
    Since $f(S,I)=If_1(S,I)$, the equation $f(S,I) - (\mu+\gamma_1+\gamma_2+\alpha)I=0$ is equivalent to $f_1(S,I)-(\mu+\gamma_1+\gamma_2+\alpha)=0$ for $I>0$. Hence, in order to determine the positive equilibria of \eqref{mod2}, we must solve the system given by
    \begin{equation}\label{qq}
    S=\frac{\Lambda}{\mu}-\frac{1}{\mu}\left(\mu+\gamma_2+\alpha-\frac{\delta\gamma_2}{\mu+\delta}\right)I,\qquad
    f_1(S,I)=\mu+\gamma_1+\gamma_2+\alpha.
    \end{equation}
    These two equalities define a straight line $q_1$ and a curve $q_2$, respectively, on the $IS$ plane. Notice that $(\mu+\gamma_2+\alpha)(\mu+\delta)>\delta\gamma_2$ and thus, $\mu+\gamma_2+\alpha-\frac{\delta\gamma_2}{\mu+\delta}>0$, so $q_1$ has negative slope. From the hypothesis \textbf{(H2)} we have that $\frac{\partial f_1}{\partial S}(S,I)>0$, so $f_1(S,I)$ is increasing with respect to $S$. Thus, by the implicit function theorem, the second equality in \eqref{qq} defines a function $S=h(I)$, which is positive for $I>0$. Let $S_0=\Lambda/\mu$, $S_*=h(0)$ and $I_0=\Lambda/\left(\mu+\gamma_2+\alpha-\frac{\delta\gamma_2}{\mu+\delta}\right)$. Then $q_1$ and $q_2$ intersect the $S$ axis at the points $(0,S_0)$ and $(0,S_*)$, respectively, and $q_1$ intersects the $I$ axis at $(I_0,0)$ (see Figure \ref{fig:qq}). The function $h(I)$ either exists and is continuous for $I\in\left(0,\ I_0\right)$ or reaches infinity in this interval. In either case, it is clear that $q_1$ and $q_2$ have at least one positive point of intersection if $S_*<S_0$. Since $f(S,I)$ grows monotonically with respect to $S$, then $S_0/S_*>1$ holds whenever
    \begin{align*}
    	\lim_{I\to 0^+}\frac{f(S_0,I)}{f(S_*,I)} & =\lim_{I\to 0^+}\frac{f(S_0,I)}{If_1(S_*,I)}=\lim_{I\to 0^+}\frac{f(S_0,I)}{(\mu+\gamma_1+\gamma_2+\alpha)I}                                       \\
    	                                         & =\frac{1}{\mu+\gamma_1+\gamma_2+\alpha}\cdot\lim_{I\to 0^+}\frac{f(S_0,I)}{I} =\frac{1}{\mu+\gamma_1+\gamma_2+\alpha}\cdot\frac{\Lambda}{\mu}\beta \\
    	                                         & =R_0>1.
    \end{align*}
\end{proof}

\begin{figure}[h]\centering
    \begin{tikzpicture}
        \draw[->] (0,0) node[anchor=north east] {$O$} -- (5,0) node[anchor=north] {$I$};
        \draw[->] (0,0) -- (0,5) node[anchor=east] {$S$};
        \draw[thick] (0,1) to[out=0,in=240] (4.5,4)
        node[anchor=south] {$q_2$};
        \draw[thick] (-.5,4.5) -- (4,-.5)
        node[anchor=north] {$q_1$};
        \draw (0.1,3.9) node[anchor=east] {$S_0$};
        \draw (0.1,1) node[anchor=east] {$S_*$};
        \draw (3.8,0) node[anchor=north east] {$I_0$};
    \end{tikzpicture}
    \caption{Graphs of the line $q_1$ and the curve $q_2$.}
    \label{fig:qq}
\end{figure}
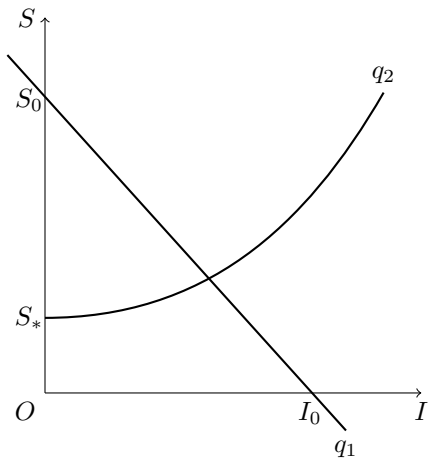

\section{Global stability}

In this section, we will show the global stability of the DFE for System \eqref{mod2} using LaSalle's invariance principle, and we will establish   the global stability of the endemic equilibrium by means of Lyapunov's direct method.

\begin{teo}\label{teo:globDFE}
    If $R_0\le1$, then the disease-free equilibrium $E_0$ of System \eqref{mod2} is globally asymptotically stable in $\Omega$ and there is no endemic equilibrium.
\end{teo}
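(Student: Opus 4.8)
The plan is to take the number of infected individuals itself as a Lyapunov function and to finish with LaSalle's invariance principle on the compact, positively invariant set $\Omega$. Concretely, I would set $V(S,I,R)=I$ and differentiate along solutions of \eqref{mod2}, obtaining
\[
\dot V=\di=f(S,I)-(\mu+\gamma_1+\gamma_2+\alpha)I.
\]
Every point of $\Omega$ satisfies $0\le S,I\le S_0$, so Lemma \ref{lem1} gives $f(S,I)\le\frac{\Lambda}{\mu}\beta I$. Substituting this bound and recalling the definition \eqref{r0} of $R_0$, I would obtain
\[
\dot V\le\left(\tfrac{\Lambda}{\mu}\beta-(\mu+\gamma_1+\gamma_2+\alpha)\right)I=(\mu+\gamma_1+\gamma_2+\alpha)(R_0-1)I,
\]
which is $\le 0$ throughout $\Omega$ whenever $R_0\le1$. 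This is the routine part.

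Next I would identify the set $E=\{(S,I,R)\in\Omega:\dot V=0\}$. When $R_0<1$ the coefficient $(\mu+\gamma_1+\gamma_2+\alpha)(R_0-1)$ is strictly negative, so $\dot V=0$ forces $I=0$ at once. The borderline case $R_0=1$ is the one that needs care: there $\dot V=0$ only yields $f(S,I)=(\mu+\gamma_1+\gamma_2+\alpha)I$, i.e. $f_1(S,I)=\tfrac{\Lambda}{\mu}\beta=f_1(S_0,0)$ for $I>0$. Here I would invoke hypothesis \textbf{(H2)}: since $f_1$ is strictly increasing in $S$ and non-increasing in $I$, and since every point of $\Omega$ has $S\le S_0$, the equality $f_1(S,I)=f_1(S_0,0)$ can hold only when $S=S_0$; but $S=S_0=\Lambda/\mu$ together with $S+I+R\le\Lambda/\mu$ forces $I=R=0$. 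Hence in both cases $E\subseteq\{I=0\}$.

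It then remains to pin down the largest invariant subset $M$ of $E$. On $\{I=0\}$ the system reduces to $\dr=-(\mu+\delta)R$ and $\ds=\Lambda-\mu S+\delta R$, so any complete trajectory confined to this set must have $R\equiv0$ (otherwise $R(t)=R(0)e^{-(\mu+\delta)t}$ is unbounded in backward time, contradicting $R\le\Lambda/\mu$ on $\Omega$) and consequently $S\equiv S_0$. Thus $M=\{E_0\}$, and LaSalle's invariance principle shows that every solution starting in $\Omega$ converges to $E_0$; combined with the local asymptotic stability from the next-generation analysis when $R_0<1$ (and a direct Lyapunov-stability check in the critical case $R_0=1$), this gives global asymptotic stability. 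For the nonexistence claim, the same inequalities suffice: any equilibrium with $I>0$ would require $f_1(S,I)=\mu+\gamma_1+\gamma_2+\alpha$ with $S\le S_0$, whereas Lemma \ref{lem1} gives $f_1(S,I)\le\tfrac{\Lambda}{\mu}\beta=R_0(\mu+\gamma_1+\gamma_2+\alpha)\le\mu+\gamma_1+\gamma_2+\alpha$, with equality only when $S=S_0$ (hence $I=0$), a contradiction. I expect the delicate point to be precisely the $R_0=1$ case — forcing the zero-set of $\dot V$ and the invariant set $M$ down to the single equilibrium $E_0$ rather than the whole face $\{I=0\}$, which is where the structural hypothesis \textbf{(H2)} and the bound $S\le S_0$ on $\Omega$ do the essential work.
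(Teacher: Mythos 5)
Your proposal is correct and follows essentially the same route as the paper: the Lyapunov function $V=I$, the bound $f(S,I)\le\frac{\Lambda}{\mu}\beta I$ from Lemma \ref{lem1} giving $\der{V}\le(\mu+\gamma_1+\gamma_2+\alpha)(R_0-1)I\le0$, and LaSalle's invariance principle. If anything, you are more careful than the published proof at two points — the equality analysis via \textbf{(H2)} forcing $I=0$ on the set where $\der{V}=0$ in the borderline case $R_0=1$ (the paper asserts this directly ``from the above inequalities''), and the backward-boundedness argument pinning the largest invariant set down to $\{E_0\}$, where the paper instead takes the whole plane $I=0$ and argues convergence within it as a second step.
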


\begin{proof}
    We will use the function $V(S,I,R)=I$. The derivative of $V$ along the solutions of \eqref{mod2} is given by
    \[\der{V}=
    f(S,I) - (\mu+\gamma_1+\gamma_2+\alpha)I.
    \]
    
    From Lemma \ref{lem1}, we have that $f(S,I)\le \frac{\Lambda}{\mu}\beta I$. Then
    \begin{align*}
    	\der{V} & \le \left[\frac{\Lambda}{\mu}\beta - (\mu+\gamma_1+\gamma_2+\alpha)\right]I                                                \\
    	        & =(\mu+\gamma_1+\gamma_2+\alpha)\frac{\Lambda\beta-\mu(\mu+\gamma_1+\gamma_2+\alpha)}{\mu(\mu+\gamma_1+\gamma_2+\alpha)}I \\
    	        & =(\mu+\gamma_1+\gamma_2+\alpha)(R_0-1)I.
    \end{align*}
    If $R_0\le1$, then we have $\der{V}\le(\mu+\gamma_1+\gamma_2+\alpha)(R_0-1)I\le0$. Suppose that $(S(t),I(t),R(t))$ is a solution of \eqref{mod2} contained entirely in the set $M=\left\{(S,I,R)\in\Omega\mid \der{V}=0\right\}$. Then $\di=0$ and, from the above inequalities, we have that $I=0$. Thus, the largest positively invariant set contained in $M$ is the plane $I=0$. By LaSalle's invariance principle, this implies that all solutions in $\Omega$ approach the plane $I=0$ as $t\to\infty$. On the other hand, solutions of $\eqref{mod2}$ contained in such plane satisfy $\ds=\Lambda-\mu S+\delta R$, $\dr=-(\mu+\delta)R$, which implies that $S\to\frac{\Lambda}{\mu}$ and $R\to0$ as $t\to\infty$, that is, all of these solutions approach $E_0$. Therefore, we conclude that $E_0$ is globally attractive in $\Omega$. Also, since the derivative of a Lyapunov function must be zero at all equilibrium points and $E_0$ is the only equilibrium in the plane $I=0$, this implies that \eqref{mod2} has no equilibria in $\Omega$ different from $E_0$.\bigskip
\end{proof}

Let $E_1=(S^*,\ I^*,\ R^*)$ be an endemic equilibrium of \eqref{mod2}. We will now establish the global stability of $E_1$ by constructing a Lyapunov function and using Lyapunov's direct method.

In \cite{tang2017new}, the authors considered an SIRS model with a general incidence rate of the form $\beta f(S)g(I)$ and proposed a Lyapunov function for the stability of the endemic equilibrium. We will modify that function to adapt it to our model. For that, we need to introduce the following two assumptions, which are similar to those used in \cite{tang2017new}:
\begin{description}
    \item[(A1)] $(2\mu+\alpha)(\mu+\delta)>\mu\gamma_2$.
    \item[(A2)] Let $(S^*,\ I^*,\ R^*)$ be an endemic equilibrium of \eqref{mod2} and define the function
    \[G(u,v)=\frac{1}{u-S^*}\left[\frac{f(u,v)}{v}-\frac{f(S^*,I^*)}{I^*}\right].
    \]
    Then there exists a positive constant $k_1$ such that
    \begin{equation}\label{con1}
    2\mu(\mu+\alpha)>
    \left[2\mu+\alpha - k_1G(u,v)\right]^2
    \end{equation}
    for all $u\ne S^*$ and $0\le u,v\le\frac{\Lambda}{\mu}$.
\end{description}

\begin{teo}\label{teo:globEND}
    Suppose that assumptions \textbf{(A1)} and \textbf{(A2)} hold. If $R_0>1$, then the endemic equilibrium $E_1$ of System \eqref{mod2} is unique and globally asymptotically stable in $\Omega$.
\end{teo}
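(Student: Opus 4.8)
The plan is to assemble a Lyapunov function from three pieces — a quadratic in the total-population deviation, a Volterra (logarithmic) term in $I$, and a quadratic in $R-R^*$ — and then to show its derivative is a negative definite quadratic form in the shifted variables $x=S-S^*$, $y=I-I^*$, $z=R-R^*$. Writing $N=S+I+R$ and $N^*=S^*+I^*+R^*$, I would take
\[
V=\tfrac12\left(N-N^*\right)^2+k_1\left(I-I^*-I^*\ln\tfrac{I}{I^*}\right)+\tfrac{c}{2}\left(R-R^*\right)^2,
\]
where $k_1$ is the constant supplied by \textbf{(A2)} and $c>0$ is to be chosen. Because $I-I^*-I^*\ln(I/I^*)\ge0$ with equality only at $I=I^*$, the function $V$ is positive definite on $\Omega$ with its minimum at $E_1$; establishing this is routine.

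The next step is to differentiate. Summing the three equations of \eqref{mod2} gives $\dot N=\Lambda-\mu N-\alpha I=-\mu(N-N^*)-\alpha(I-I^*)$, so the first term contributes $-\mu(N-N^*)^2-\alpha(N-N^*)(I-I^*)$ and, crucially, the transmission term $f$ cancels because it enters $\dot S$ and $\dot I$ with opposite signs. For the logarithmic term, using the equilibrium identity $\mu+\gamma_1+\gamma_2+\alpha=f(S^*,I^*)/I^*$ together with $f=If_1$, one obtains
\[
\frac{d}{dt}\left(I-I^*-I^*\ln\tfrac{I}{I^*}\right)=(I-I^*)\left[f_1(S,I)-f_1(S^*,I^*)\right]=(S-S^*)(I-I^*)\,G(S,I),
\]
which is exactly the cross term measured by the function $G$ of \textbf{(A2)}. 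The last term contributes $c\gamma_2(I-I^*)(R-R^*)-c(\mu+\delta)(R-R^*)^2$. Substituting $N-N^*=x+y+z$ collects $\dot V$ into a single quadratic form in $(x,y,z)$ whose $(x,y)$-block already carries the coefficients $-\mu$, $-(\mu+\alpha)$ and the off-diagonal entry $\tfrac12\left(k_1G-(2\mu+\alpha)\right)$ that appear in \textbf{(A2)}.

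The decisive step is to eliminate $z$ and fix $c$. I would complete the square in $z$: the $z$-terms are $-[\mu+c(\mu+\delta)]z^2+[-2\mu x+(c\gamma_2-(2\mu+\alpha))y]z$, and completing the square leaves a reduced quadratic form $\tilde q(x,y)$. The key choice is $c=(2\mu+\alpha)/\gamma_2$ (legitimate when $\gamma_2>0$), which makes $c\gamma_2-(2\mu+\alpha)=0$ and thereby removes every $y$-contribution from the correction term. Then $\tilde q$ has coefficients
\[
A=-\mu\frac{c(\mu+\delta)}{\mu+c(\mu+\delta)},\qquad C=-(\mu+\alpha),\qquad B=k_1G-(2\mu+\alpha),
\]
and $\dot V\le0$ reduces to the single discriminant inequality $4AC\ge B^2$, that is $4\mu\frac{c(\mu+\delta)}{\mu+c(\mu+\delta)}(\mu+\alpha)\ge(2\mu+\alpha-k_1G)^2$. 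Here \textbf{(A1)} does precisely its job: $(2\mu+\alpha)(\mu+\delta)>\mu\gamma_2$ is equivalent to $\frac{c(\mu+\delta)}{\mu+c(\mu+\delta)}>\tfrac12$, so the left-hand side exceeds $2\mu(\mu+\alpha)$, and \textbf{(A2)} then gives $2\mu(\mu+\alpha)>(2\mu+\alpha-k_1G)^2$. Hence $\tilde q$ is negative definite and $\dot V\le0$ throughout $\Omega$.

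Finally, $\dot V=0$ forces both $\tilde q(x,y)=0$ and the completed $z$-square to vanish; negative definiteness of $\tilde q$ yields $x=y=0$, whence $z=0$, so the only invariant set contained in $\{\dot V=0\}$ is $\{E_1\}$. LaSalle's invariance principle then gives global asymptotic stability of $E_1$ in $\Omega$, and since $\dot V<0$ off $E_1$ no other equilibrium can exist, which also yields uniqueness. I expect the main obstacle to be exactly the algebra of the preceding paragraph: organizing $\dot V$ so that the transmission nonlinearity collapses into the single factor $G(S,I)$, spotting the cancelling choice $c=(2\mu+\alpha)/\gamma_2$, and verifying that \textbf{(A1)} and \textbf{(A2)} are precisely the two inequalities that remain. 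A minor separate point is the degenerate case $\gamma_2=0$, where $R^*=0$ and the $R$-component decouples; there one drops the $(R-R^*)^2$ term and runs the reduced two-dimensional argument.
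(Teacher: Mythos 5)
Your proposal is correct and follows essentially the same route as the paper: the identical Lyapunov function $\tfrac12\left(N-N^*\right)^2+k_1\left(I-I^*-I^*\ln\tfrac{I}{I^*}\right)+\tfrac{c}{2}\left(R-R^*\right)^2$, the same cancelling choice $c=k_2=(2\mu+\alpha)/\gamma_2$, and the same deployment of \textbf{(A1)} and \textbf{(A2)}; the only difference is that the paper verifies negative definiteness of $\der{V}$ by splitting it into two $2\times2$ positive-definite blocks $Q$ (in $S-S^*,\ I-I^*$) and $P$ (in $S-S^*,\ R-R^*$), halving the $\mu(S-S^*)^2$ term between them, whereas you eliminate $R-R^*$ by completing the square (a Schur-complement reduction), and both bookkeepings consume exactly the same two inequalities. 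Your closing remark on the degenerate case $\gamma_2=0$, where $k_2=(2\mu+\alpha)/\gamma_2$ is undefined, is a genuine edge case that the paper's proof silently omits.
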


\begin{proof}
    Consider the function
    \begin{equation*}
    	V(S,I,R) = \frac12(S-S^*+I-I^*+R-R^*)^2 + k_1\left(I-I^*-I^*\ln\frac{I}{I^*}\right) + \frac{k_2}{2}(R-R^*)^2,
    \end{equation*}
    where $k_1$ is the positive constant from assumption \textbf{(A2)} and $k_2>0$ will be determined later. It is clear that $V(S^*,I^*,R^*)=0$ and that $V(S,I,R)>0$ for all $(S,I,R)\in\Omega-\{E_1\}$. Therefore, $V(S,I,R)$ is a Lyapunov function. Now, we can rewrite \eqref{mod2} as
    \begin{equation}\label{mod2b}
    \begin{aligned}
    	\ds & =  -\mu(S-S^*) -\big[f(S,I)-f(S^*,I^*)\big] +\gamma_1(I-I^*) +\delta (R-R^*), \\
    	\di & =  \big[f(S,I)-f(S^*,I^*)\big] - (\mu+\gamma_1+\gamma_2+\alpha)(I-I^*),       \\
    	\dr & =  \gamma_2(I-I^*) -(\mu+\delta)(R-R^*).
    \end{aligned}
    \end{equation}
    Using also that $(\mu+\gamma_1+\gamma_2+\alpha)=f(S^*,I^*)/I^*$ and
    \[\der{I}=f(S,I)-(\mu+\gamma_1+\gamma_2+\alpha)I=f(S,I)-\frac{I}{I^*}f(S^*,I^*),
    \]
    we can calculate the time derivative of $V(S,I,R)$ along the solutions of \eqref{mod2b}:
    {\footnotesize 
    \begin{align*}
    	\der{V} & = (S-S^*+I-I^*+R-R^*)\der{(S+I+R)} +  k_1\left(1-\frac{I^*}{I}\right)\der{I} + k_2(R-R^*)\der{R}                              \\
    	        & = (S-S^*+I-I^*+R-R^*)\big[-\mu(S-S^*)-(\mu+\alpha)(I-I^*)-\mu(R-R^*)\big]                                                     \\
    	        & \quad + k_1\frac{I-I^*}{I}\Big[f(S,I)-\frac{I}{I^*}f(S^*,I^*)\Big] + k_2(R-R^*)\big[\gamma_2(I-I^*) -(\mu+\delta)(R-R^*)\big] \\
    	        & = - \mu(S-S^*)^2-(\mu+\alpha)(I-I^*)^2 -\big[\mu+k_2(\mu+\delta)\big](R-R^*)^2-(2\mu+\alpha)(S-S^*)(I-I^*)                    \\
    	        & \quad -2\mu(S-S^*)(R-R^*) -(2\mu+\alpha)(I-I^*)(R-R^*)+\gamma_2k_2(I-I^*)(R-R^*)                                              \\
    	        & \quad + k_1(I-I^*)\left[\frac{f(S,I)}{I}-\frac{f(S^*,I^*)}{I^*}\right],
    \end{align*}}
    We can take $k_2=(2\mu+\alpha)/\gamma_2$, so that the terms in $(I-I^*)(R-R^*)$ cancel out. Thus,
    {\footnotesize 
    \begin{equation*}
    \begin{aligned}
    	\der{V} & = - \mu(S-S^*)^2-(\mu+\alpha)(I-I^*)^2 -\big[\mu+k_2(\mu+\delta)\big](R-R^*)^2-(2\mu+\alpha)(S-S^*)(I-I^*)                                                         &  \\
    	        & \quad -2\mu(S-S^*)(R-R^*) + k_1(I-I^*)\left[\frac{f(S,I)}{I}-\frac{f(S^*,I^*)}{I^*}\right]                                                                         &  \\
    	        & = -\Bigg\{\frac{\mu}{2}(S-S^*)^2 + \left\{2\mu+\alpha - \frac{k_1}{S-S^*}\left[\frac{f(S,I)}{I}-\frac{f(S^*,I^*)}{I^*}\right]\right\}(S-S^*)(I-I^*)             \\ & \quad +(\mu+\alpha)(I-I^*)^2\Bigg\} -\Bigg\{\frac{\mu}{2}(S-S^*)^2 + 2\mu(S-S^*)(R-R^*) +\big[\mu+k_2(\mu+\delta)\big](R-R^*)^2
    \Bigg\} \\
    	        & = - \left[S-S^*,\ I-I^*\right] Q \left[S-S^*,\ I-I^*\right]^T - \left[S-S^*,\ R-R^*\right] P \left[S-S^*,\ R-R^*\right]^T,                                         &
    \end{aligned}
    \end{equation*}}
    where
    \[P=
    \begin{bmatrix}
    	\frac{\mu}{2} & \mu                 \\
    	\mu           & \mu+k_2(\mu+\delta)
    \end{bmatrix},
    \]
    \[Q=
    \begin{bmatrix}
    	\frac{\mu}{2}                          & \frac12\big[2\mu+\alpha-k_1G(S,I)\big] \\
    	\frac12\big[2\mu+\alpha-k_1G(S,I)\big] & \mu+\alpha
    \end{bmatrix}.\]
    Assumptions \textbf{(A1)} and \textbf{(A2)} imply that all leading principal minors of $P$ and $Q$ are positive. Therefore, $P$ and $Q$ are positive-definite matrices.
    
    Let $w=\left[S-S^*,\ I-I^*\right]^T$ and $v=\left[S-S^*,\ R-R^*\right]^T$. Then $w^TQw\ge0$ and $v^TPv\ge0$, so
    \[\der{V}\le-w^TQw-v^TPv\le0.\]
    Suppose now that $\der{V}(S,I,R)=0$ for some $(S,I,R)\in\Omega$. Then $w^TQw=0$ and $v^TPv=0$, and the positive-definiteness of $P$ and $Q$ implies that $w=0$ and $v=0$. Thus, $\left[S-S^*,I-I^*\right]^T=0$ and $\left[S-S^*,R-R^*\right]^T=0$, so $(S,I,R)=(S^*,I^*,R^*)=E_1$.
    
    This implies that the set $\{(S,I,R)\in\Omega\mid\der{V}(S,I,R)=0\}$ consists only of the point $E_1$. Hence, by Lyapunov's direct method, we conclude that $E_1$ is globally asymptotically stable, and this also proves the uniqueness of the endemic equilibrium.
\end{proof}

\begin{rem}
    The inequality in \textbf{(A1)} can be rewritten as $2\mu^2+(\alpha+2\delta-\gamma_2)\mu+\alpha\delta>0$. Since all parameters are assumed to be positive, we can see that \textbf{(A1)} holds if $\mu$ or $\gamma_2$ are close to zero, that is, if the natural death rate or the transfer rate from infected to recovered classes are small compared to the other parameters.
\end{rem}

\begin{rem}
    Theorems \ref{teo:globDFE} and \ref{teo:globEND} show that, under assumptions \textbf{(A1)} and \textbf{(A2)}, the prevalence or extinction of the disease is completely determined by the basic reproduction number, since the DFE is globally stable for $R_0\le1$, while the unique endemic equilibrium is globally stable for $R_0>1$.
\end{rem}

\section{Numerical simulations}

In this section we present some numerical simulations of the solutions for System \eqref{mod2}. We will consider a particular case for the incidence function $f(S,I)$ in order to check out the theoretical results proved above.

\begin{ejm}\label{ex1}
    In \cite{sun2007global}, the authors consider the incidence rate described by the non-linear function $f(S,I)=kI^pS^q$, $p>0$, $q>0$, which includes the traditional bilinear incidence from the principle of mass action ($p=q=1$). It can be verified that the assumptions \textbf{(H1)} and \textbf{(H2)} hold for $q\ge1$ and $p=1$. In such case, we have $\lim\limits_{I\to 0^+}\frac{f(S,I)}{I}=kS^q$, so \textbf{(H3)} also holds. Thus, for the incidence function
    $$f(S,I)=kIS^q,$$
    the basic reproduction number of System \eqref{mod2} is given by
    \[
    R_0=\frac{\Lambda^qk}{\mu^q(\mu+\gamma_1+\gamma_2+\alpha)}.
    \]
    
    Figure \ref{fig:a} shows several solutions of \eqref{mod2} with different initial conditions when $R_0<1$, and it can be seen that the DFE is globally asymptotically stable.
    
    \begin{figure}
        \begin{subfigure}[b]{0.75\textwidth}
            \includegraphics[width=\textwidth]{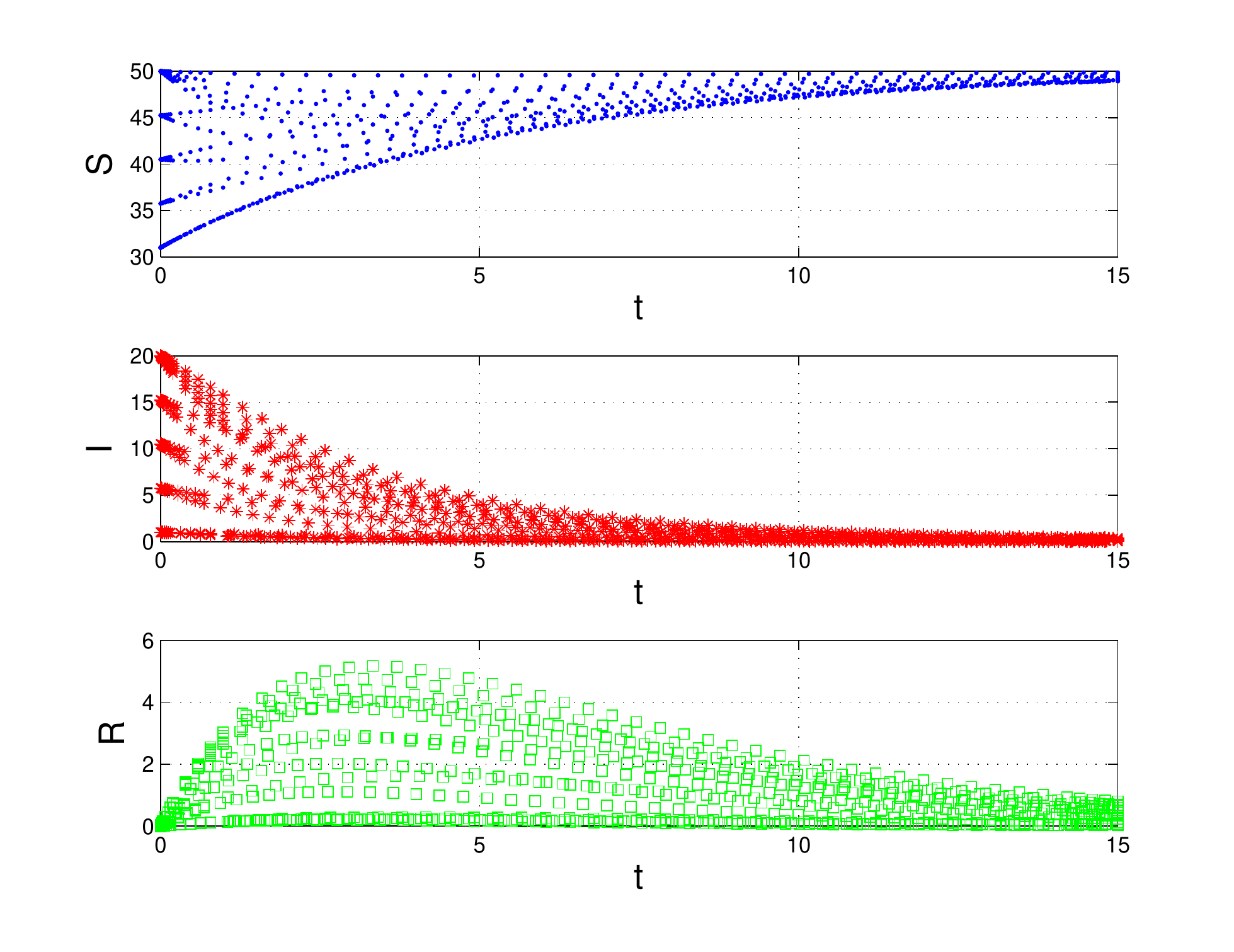}
            \caption{}
        \end{subfigure}
        \begin{subfigure}[b]{0.75\textwidth}
            \includegraphics[width=\textwidth]{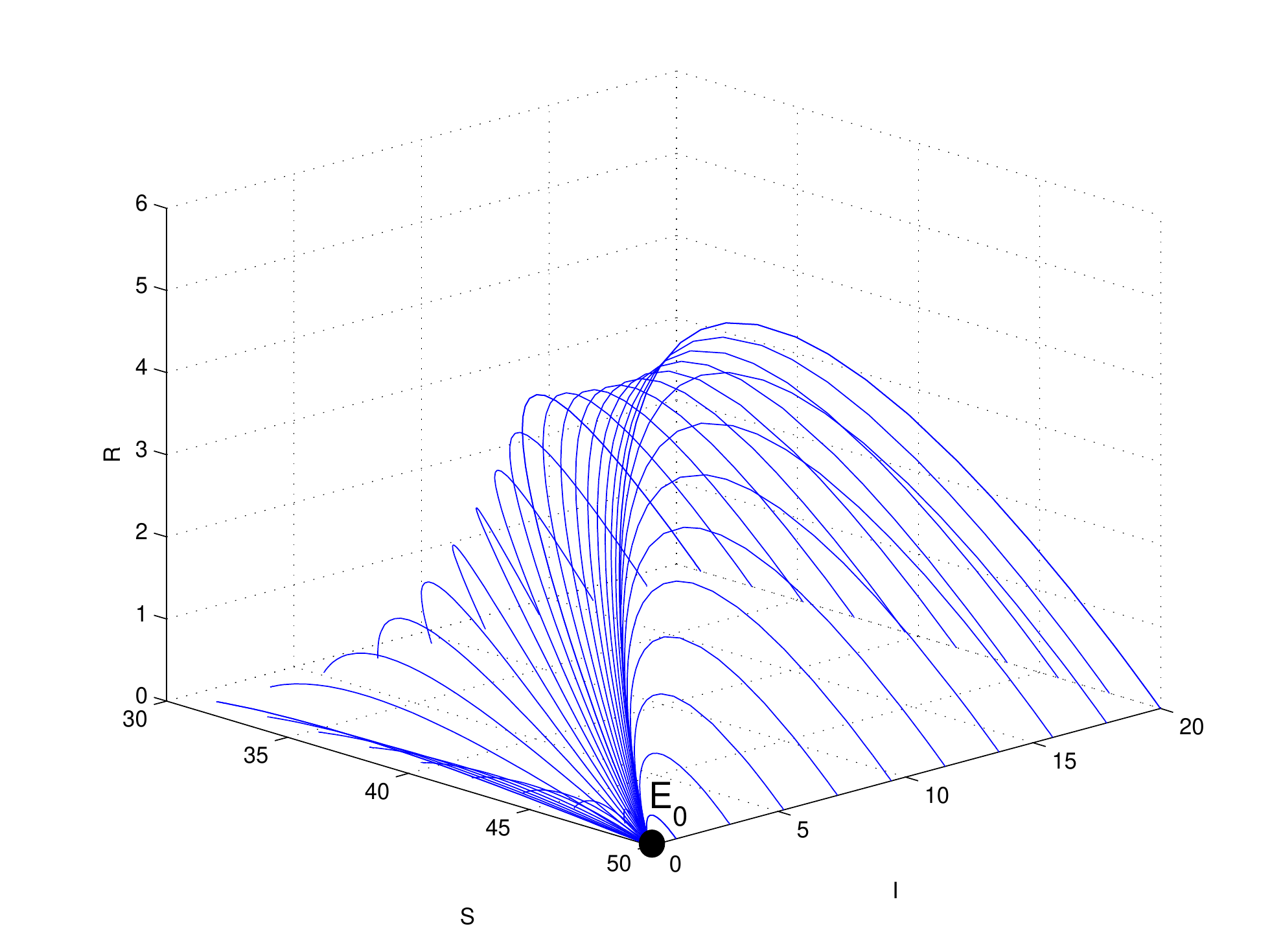}
            \caption{}
        \end{subfigure}
        \caption{Time-series (\textsc{a}) and 3-dimensional phase portrait (\textsc{b}) of System \eqref{mod2} for $f(S,I)=kIS^q$, using the parameters $k=0.0002$, $q=2$, $\Lambda=10$, $\mu=0.2$, $\gamma_1=0.2$, $\gamma_2=0.2$, $\alpha=0.1$, $\delta=0.1$ such that $R_0=0.7143<1$. The disease-free equilibrium is $E_0=(50,\ 0,\ 0)$. For this set of parameters, the system has no endemic equilibria and the DFE is globally asymptotically stable.}\label{fig:a}
    \end{figure}
    
    Taking a different set of parameters such that $R_0>1$, we obtain the solutions shown in Figure \ref{fig:b}. With these parameters, we have $(\mu+\alpha)(\mu+\delta)=0.15>\mu\gamma_2=0.04$, so \textbf{(A1)} holds. We will verify now the condition \textbf{(A2)}: with the coordinates of the endemic equilibrium $(S^*,\ I^*, R^*)=(29.5804,\ 9.4244,\ 6.2830)$, we have
    $$G(u,v)=\frac{0.0008u^2-0.7}{u-29.5804}.
    $$
    If we choose the positive constant $k_1=7$, then Inequality \eqref{con1} becomes
    $$0.12>\left[0.5-\frac{7\left(0.0008u^2-0.7\right)}{u-29.5804}\right]^2=:h(u).
    $$
    Figure \ref{fig:uhu} shows the graph of $h(u)$, where it can be seen that $h(u)<0.12$ for $0\le u\le50$. Therefore, from Theorem \ref{teo:globEND} we conclude that the endemic equilibrium is globally asymptotically stable.
    
    \begin{figure}
        \begin{subfigure}[b]{0.75\textwidth}
            \includegraphics[width=\textwidth]{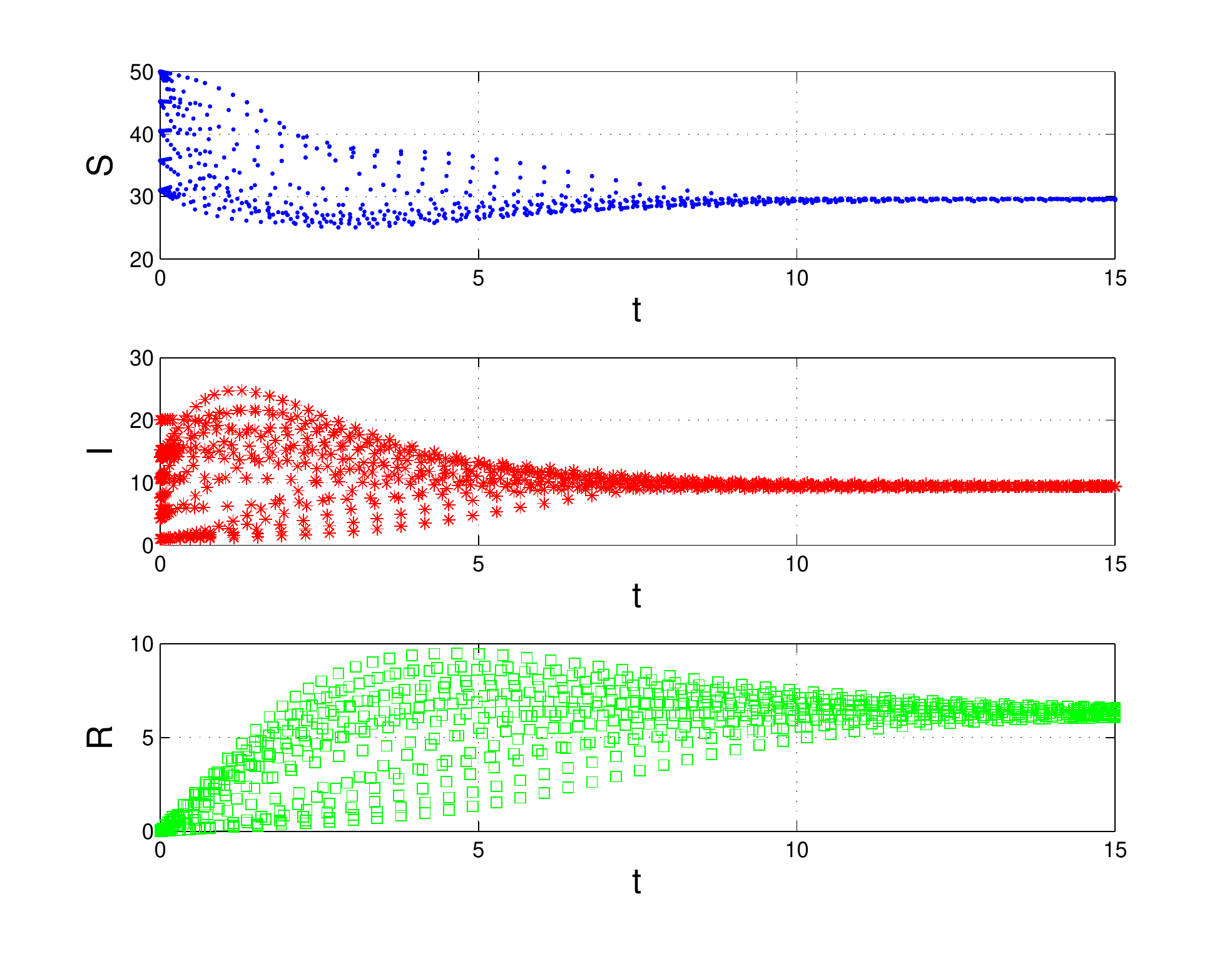}
            \caption{}
        \end{subfigure}
        \begin{subfigure}[b]{0.75\textwidth}
            \includegraphics[width=\textwidth]{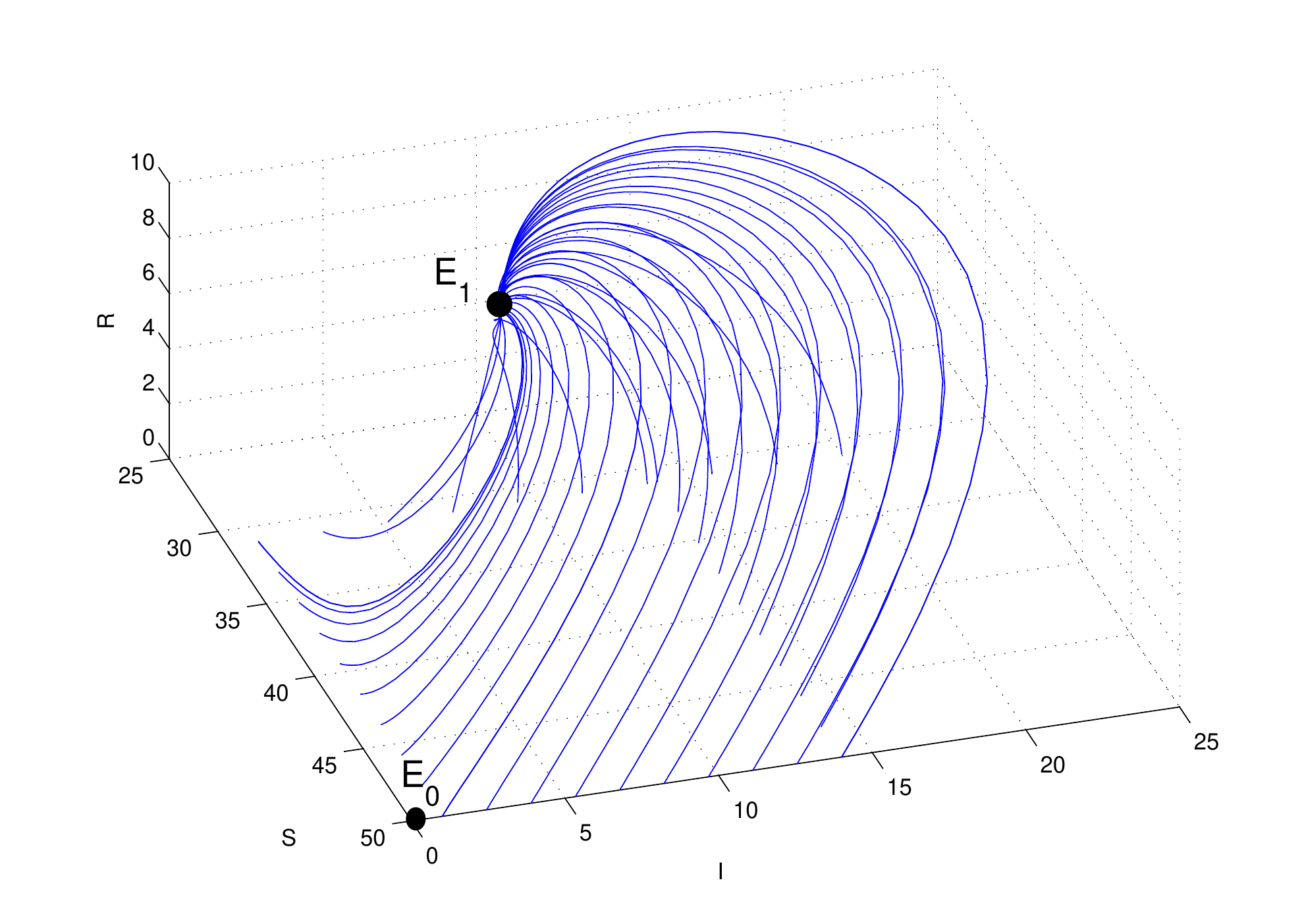}
            \caption{}
        \end{subfigure}
        \caption{Time-series (\textsc{a}) and 3-dimensional phase portrait (\textsc{b}) of System \eqref{mod2} for $f(S,I)=kIS^q$, using the parameters $k=0.0008$, $q=2$, $\Lambda=10$, $\mu=0.2$, $\gamma_1=0.2$, $\gamma_2=0.2$, $\alpha=0.1$, $\delta=0.1$ such that $R_0=2.8571>1$. The disease-free equilibrium is $E_0=(50,\ 0,\ 0)$, which is unstable. The unique endemic equilibrium is $E_1=(29.5804,\ 9.4244,\ 6.2830)$ and is globally asymptotically stable.}\label{fig:b}
    \end{figure}
    
    \begin{figure}
        \centering
        \includegraphics[width=0.7\linewidth]{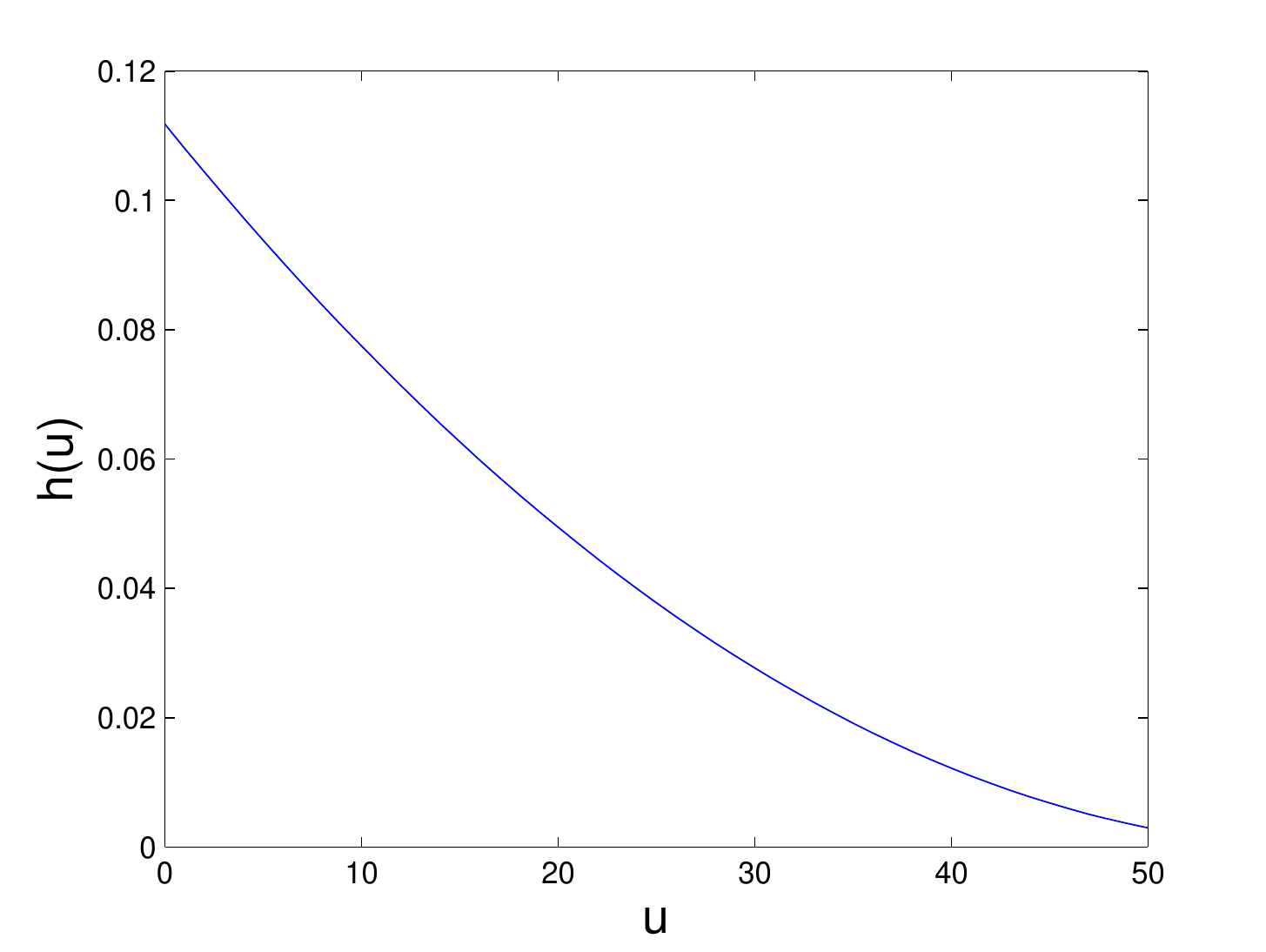}
        \caption{Graph of $u$ and $h(u)$ as defined in example \ref{ex1} with the parameters from figure \ref{fig:b}.}
        \label{fig:uhu}
    \end{figure}

\end{ejm}

\section{Discussion}

In this paper we considered an SIRS epidemic model with a non-linear incidence rate of the form $f(S,I)$. We have proved that if the basic reproduction number is greater than 1 and some conditions for the parameters of the system hold, then the endemic equilibrium is unique and globally asymptotically stable. We extended the work done in \cite{li2017threshold} by considering a more general incidence rate than the function $Sf(I)$ studied there, at the expense of introducing the assumptions \textbf{(A1)} and \textbf{(A2)} in order to establish the global stability of the endemic equilibrium. The first of these assumptions is an inequality of the parameters which is verified immediately whenever the natural death rate or the transfer rate from infected to recovered are close to zero. This implies that, under such conditions, the basic reproduction number acts as a threshold parameter which completely determines the prevalence or extinction of the disease. An interesting question which could be studied in the future would be to investigate the effects that introducing a treatment function in system \eqref{mod2} has on the global dynamics of the disease.

\printbibliography

\end{document}